\numberwithin{equation}{section}
\newtheorem{theorem}{Theorem}
\numberwithin{theorem}{section}
\newtheorem{proposition}[theorem]{Proposition}
\newtheorem{lemma}[theorem]{Lemma}
\newcommand{\C}{\mathbb{C}}
\newcommand{\calN}{\mathcal{N}}
\newcommand{\calNI}{\overset{\circ}{\mathcal{N}}}
\newcommand{\R}{\mathbb{R}}
\title[Real intersection points of a sparse plane curve with a line]{A sharp bound on the number of real intersection points of a sparse plane curve with a line}
\author{Fr\'ed\'eric Bihan}
\address{Laboratoire de Math\'ematiques\\
         Universit\'e Savoie Mont Blanc\\
         73376 Le Bourget-du-Lac Cedex\\
         France}
\email{Frederic.Bihan@univ-smb.fr}
\urladdr{http://www.lama.univ-savoie.fr/~bihan/}
\author{Boulos El Hilany}
\address{Laboratoire de Math\'ematiques\\
         Universit\'e Savoie Mont Blanc\\
         73376 Le Bourget-du-Lac Cedex\\
         France}
\email{boulos.el-hilany@univ-smb.fr}
\begin{document}
\maketitle
\begin{abstract}

We prove that the number of real intersection points of a real line  with a real plane curve defined  by a polynomial with at most $t $ monomials is either infinite or does not exceed $6t -7$. This improves a result by M. Avendano. Furthermore, we prove that this bound is sharp for $t = 3$ with the help of Grothendieck's dessins d'enfant.
\end{abstract}
\section{Introduction}
The problem of estimating the number of real solutions of a system of polynomial equations is ubiquitous in mathematics and has obvious practical motivations. Fundamental notions like the degree or mixed volume give good estimates for the number of complex solutions of polynomial systems. However, these estimates can be rough for the number of real solutions when the equations have few monomials or a special structure (see~\cite{S}). In the case of a non-zero single polynomial in one variable, this is a consequence of Descartes' rule of signs which implies that the number of real roots is bounded by $2t-1$, where $t$ is the number of non-zero terms of the polynomial. Generalizations of Descartes' bound for polynomial and  more general systems have been obtained by A. Khovanskii~\cite{Kho}. The resulting bounds for polynomial systems have been improved by F. Bihan and F. Sottile~\cite{BS}, but still very few optimal bounds are known, even in the case of two polynomial equations in two variables. Polynomial systems in two variables where one equation has three non-zero terms and the other equation has $t$ three non-zero terms have been studied by T.Y. Li,  J.-M. Rojas and X. Wang~\cite{LRW}. They showed that such a system, allowing real exponents, has at most $2^t-2$ non-degenerate solutions contained in the positive orthant. This exponential bound has recently been refined into a polynomial one by P. Koiran, N. Portier and S. Tavenas \cite{KPT}.
The authors of~\cite{LRW} also showed that for $t=3$ the sharp bound is five.  Systems of two trinomial equations with five non-degenerate solutions in the positive orthant are in a sense rare~\cite{DRRS}.  Later M. Avenda\~no~\cite{A} considered systems of two polynomial equations in two variables, where the first equation has degree one and the other equation has $t$ non-zero terms. He showed that such a system has either an infinite number of real solutions or at most $6t-4$ real solutions. Here all solutions are counted with multiplicities, with the exception of the solutions on the real coordinate axis which are counted at most once. This reduces to counting the number of real roots of a polynomial $f(x,ax+b)$, where $a,b \in \R$ and $f \in \R[x,y]$ has at most $t$ non-zero terms. The question of optimality was not adressed in~\cite{A} and this was the motivation for the present paper. We prove the following result.

\begin{theorem} \label{MainTh}
Let $f \in \R[x,y]$ be a polynomial with at most three non-zero terms and let $a,b$ be any real numbers. Assume that the polynomial $g(x)=f(x,ax+b)$ is not identically zero. Then $g$ has at most $6t - 7$ real roots counted
with multiplicities except for the possible roots 0 and $-a/b$ that are counted at most once.
\end{theorem}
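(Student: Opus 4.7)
The plan is to combine a reduction to a normalized setting with a Wronskian-based Rolle argument on each of three sub-intervals. A rescaling of $x$ allows me to assume $a = 1$ and then $b = 1$, so that $g(x) = \sum_{i=1}^{3} c_i\phi_i(x)$ with $\phi_i(x) = x^{\alpha_i}(x+1)^{\beta_i}$, and the distinguished points become $0$ and $-1$. The cases $a = 0$ or $b = 0$ reduce $g$ to an ordinary polynomial trinomial, for which the bound follows at once from Descartes' rule. Henceforth I work on the three open intervals $(-\infty,-1)$, $(-1,0)$, $(0,\infty)$, on each of which every $\phi_i$ is smooth and nonvanishing.

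The heart of the argument is the Wronskian. Using $\phi_i'/\phi_i = \alpha_i/x + \beta_i/(x+1)$ and column operations on the $3\times 3$ Wronskian determinant, one obtains
\[
W(\phi_1,\phi_2,\phi_3)(x) = \frac{\phi_1(x)\phi_2(x)\phi_3(x)}{[x(x+1)]^3}\cdot P(x),
\]
where $P\in\R[x]$ has degree at most $3$ (its leading coefficient is a product of differences of the totals $\alpha_i+\beta_i$, which is generically nonzero). Hence $W$ has at most three real zeros on $\R\setminus\{0,-1\}$. On each interval $I_j$ I apply Rolle's theorem twice: first to $g/\phi_k$, whose derivative is proportional to $c_i W(\phi_k,\phi_i) + c_\ell W(\phi_k,\phi_\ell)$, and then to the ratio $W(\phi_k,\phi_\ell)/W(\phi_k,\phi_i)$, whose logarithmic derivative is controlled by the Jacobi identity
\[
W\bigl(W(\phi_k,\phi_i),\,W(\phi_k,\phi_\ell)\bigr) = \phi_k\cdot W(\phi_1,\phi_2,\phi_3).
\]
Provided the pivot $k$ is chosen so that the intermediate Wronskian is nonvanishing on $I_j$, this yields the per-interval bound $Z_{I_j}(g)\le 2 + z_{I_j}(W)$, where $z_{I_j}$ counts zeros with multiplicity.

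Summing over the three intervals then gives at most $6 + z(W)\le 6+3 = 9$ zeros in $\R\setminus\{0,-1\}$, and adding at most one boundary zero each from $x = 0$ and $x = -1$ produces the desired total $11 = 6t-7$. The main obstacle I anticipate is establishing the per-interval bound cleanly in all configurations: the pivot trick can fail when two or more of the pairwise Wronskians $W(\phi_i,\phi_j)$ vanish in the same interval, and additional care is needed when $g$ itself vanishes at $0$ or $-1$. In such cases I would factor $g = x^a(x+1)^b\bar g$ to reduce to a "smaller" trinomial and iterate, and/or sharpen Rolle's inequality on intervals adjacent to a boundary root by using that $g/\phi_k$ extends continuously to $0$ there; either refinement absorbs the potential extra contribution and preserves the count of $11$.
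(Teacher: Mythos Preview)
Your outline pursues a genuinely different route from the paper. The paper never touches Wronskians; it works entirely with Descartes' sign variations. After the same normalization to $g(x)=f(x,x+1)$, it bounds $V_{I_j}(g)\le 2t-2$ on each of the three intervals via Avenda\~no's proposition, and then shows by a Newton--polytope argument (Proposition~\ref{MyTh}) that $V_{I_j}(g)=2t-2$ on one interval forces strict inequality on the other two \emph{and} forbids equality $2t-3$ on both of them simultaneously. This yields $\sum_j V_{I_j}(g)\le 3(2t-2)-3$, hence $6t-7$ after adding the two boundary points. Crucially, that argument is uniform in $t$; your Wronskian scheme is tied to $t=3$ (the $t\times t$ Wronskian would produce a numerator of degree $\binom{t}{2}$, leading to a quadratic rather than linear bound).

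Even for $t=3$, the proposal has a real gap precisely where you flag it. Your per-interval inequality $Z_{I_j}(g)\le 2+z_{I_j}(W)$ relies on finding, for each $I_j$, some pairwise Wronskian $W(\phi_k,\phi_i)$ with no zero in $I_j$. But each $W(\phi_a,\phi_b)$ vanishes at the single point $x_{ab}=-\dfrac{\alpha_b-\alpha_a}{(\alpha_b-\alpha_a)+(\beta_b-\beta_a)}$, and these three points can all lie in the \emph{same} interval (e.g.\ $(\alpha,\beta)=(0,10),(1,4),(2,0)$ puts all three in $(0,\infty)$). In that case every choice of pivot forces a split of $I_j$, and the Rolle chain only gives $Z_{I_j}(g)\le 3+z_{I_j}(W)$. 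Summing then yields $7+z(W)\le 10$ interior zeros, hence $12$ after the boundary --- one too many. Your suggested fixes do not close this gap: factoring out $x^a(x+1)^b$ leaves the pairwise Wronskian zeros unchanged (they depend only on the \emph{differences} $\alpha_b-\alpha_a$, $\beta_b-\beta_a$), and the boundary refinement concerns behaviour at $0,-1$, not the interior pivot failure. To reach $11$ by this route you would need an extra relation tying the location of the $x_{ab}$ to the zeros of the cubic numerator $P$ of $W(\phi_1,\phi_2,\phi_3)$, and no such relation is supplied. The paper's sign-variation argument sidesteps this entirely, and is what you should compare against.
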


At a first glance this looks as a slight improvement of the main result of ~\cite{A}.
In fact, our bound is optimal at least for $t=3$.

\begin{theorem} \label{T:optimal}
The maximal number of real intersection points of a real line with a real plane curve defined by a polynomial with three non-zero terms is eleven.
\end{theorem}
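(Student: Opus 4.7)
The upper bound of eleven is the $t=3$ case of Theorem~\ref{MainTh}, so it suffices to construct a real trinomial $f \in \R[x,y]$ and a real line $L \subset \R^2$ realising it. The plan is to build such an example by means of Grothendieck's dessins d'enfants.

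In the degenerate cases $a=0$ or $b=0$ the polynomial $g(x)=f(x,ax+b)$ has at most three monomials in $x$, hence at most five real roots by Descartes' rule, so we may assume $ab\neq 0$. After a monomial-preserving rescaling $x\to\lambda x$, $y\to\mu y$ (which only multiplies the $c_i$ by nonzero constants), one may assume $L=\{y=x-1\}$. The task becomes the construction of integer exponents $(\alpha_i,\beta_i)$ and nonzero real coefficients $c_i$ ($i=1,2,3$) such that
\[
g(x) \;=\; \sum_{i=1}^{3} c_i\, x^{\alpha_i}(x-1)^{\beta_i}
\]
has eleven real roots in the sense of Theorem~\ref{MainTh}. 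Dividing by $c_3\, x^{\alpha_3}(x-1)^{\beta_3}$ converts the equation $g(x)=0$ into $\varphi(x)=1$ for the real rational function
\[
\varphi(x) \;=\; -\,\frac{c_1\, x^{\alpha_1}(x-1)^{\beta_1} + c_2\, x^{\alpha_2}(x-1)^{\beta_2}}{c_3\, x^{\alpha_3}(x-1)^{\beta_3}}\;:\;\C\PP^1 \longrightarrow \C\PP^1,
\]
so that counting real roots of $g$ is reduced to counting real points in $\varphi^{-1}(1)$.

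The key tool is the \emph{real dessin} of $\varphi$, namely the graph $\Gamma=\varphi^{-1}(\R\PP^1)\subset S^2=\C\PP^1$ together with its decoration by the three arcs of $\R\PP^1\setminus\{0,1,\infty\}$; since $\varphi$ is defined over $\R$, $\Gamma$ is invariant under complex conjugation, and the cardinality of $\varphi^{-1}(c)\cap\R$ for any real $c$ can be read directly from the combinatorics of $\Gamma$. My strategy has four steps: (i) choose $(\alpha_i,\beta_i)$ so that $\varphi$ has the appropriate degree and the prescribed ramification profile at $0,1,\infty$ coming from the trinomial form; (ii) draw an admissible real dessin on $S^2$ having exactly eleven real points in the fiber over $1$; (iii) invoke the real version of Riemann's existence theorem (as exploited in the work of Orevkov, Brugall\'e, Itenberg, Mikhalkin and Shustin on real algebraic curves) to realise this combinatorial dessin by an honest real rational function; and (iv) read off the trinomial $f$ and the coefficients $c_i$ from $\varphi$.

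The main difficulty lies in step (ii): designing a real dessin on $S^2$ whose passport at $0,1,\infty$ is compatible with the trinomial form of $\varphi$ while simultaneously forcing eleven real preimages over $1$. Once a combinatorially admissible dessin with these properties has been exhibited, its realisability by a real rational function is a standard consequence of Riemann's existence theorem, and extracting $f$ and $L$ from $\varphi$ is routine.
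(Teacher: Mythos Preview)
Your high-level strategy matches the paper's: the upper bound is Theorem~\ref{MainTh} with $t=3$, and the construction goes through real dessins d'enfant. However, there is a genuine gap at your step~(iv). The requirement that the rational map produced by Riemann's existence theorem be of the specific shape
\[
\varphi(x)\;=\;-\,\frac{c_1\, x^{\alpha_1}(x-1)^{\beta_1}+c_2\, x^{\alpha_2}(x-1)^{\beta_2}}{c_3\, x^{\alpha_3}(x-1)^{\beta_3}}
\]
is an \emph{algebraic} condition on the coefficients of $\varphi$, not a ramification condition. The pole divisor of such a $\varphi$ is concentrated at $0,1,\infty$, and that much a dessin can encode; but the zero divisor consists of high-order zeros at $0$ and $1$ together with further simple zeros coming from the ``binomial'' $c_1 M_1+c_2 M_2$, and the positions of those simple zeros are constrained to lie on a one-parameter family governed by $c_1{:}c_2$. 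A dessin records only the number, multiplicities and reality of these zeros, so the map it produces will generically \emph{not} have a numerator of the required two-term form. Hence ``reading off $f$ from $\varphi$'' is not routine, and as stated your step~(iii)$\Rightarrow$(iv) fails.

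The paper circumvents this by an extra reduction you are missing. First, Propositions~\ref{AvMainRes} and~\ref{MyTh} are exploited to derive necessary sign and parity constraints on the exponents (Lemmas~\ref{L:tech1}--\ref{L:parity}), fixing a $4{+}3{+}2$ distribution of the nine roots away from $\{0,-1\}$ among the three real intervals. Then one writes these roots as a level set $f(x)=-a$ of a \emph{two}-term rational function and applies Rolle: the derivative equation $f'(x)=0$ is equivalent to $\Phi(x)=1$ for a map of the far simpler shape
\[
\Phi(x)\;=\;\frac{-b\,x^{k_2-k_3}(1+x)^{l_2}\,A_1(x)}{A_2(x)},\qquad A_1,A_2\ \text{linear}.
\]
Now the entire zero/pole divisor of $\Phi$ is concentrated at $0,-1,\infty$ plus one simple zero and one simple pole, so a degree-six dessin genuinely determines $\Phi$ up to the two free abscissae $\rho_1,\rho_2$. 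Even then one must search numerically for integer exponents $(k_2,k_3,l_2,l_1)$ making $\rho_1,\rho_2$ take the required rational values, and finally integrate back and choose $a$ to obtain the explicit trinomial. Your outline omits the preliminary exponent analysis, the passage to the derivative (which is what makes the dessin argument legitimate), and the closing numerical search; each of these is essential to the proof.
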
 

Explicitly,  the real curve with equation
\begin{equation}\label{E:equation}
-0,002404 \, xy^{18}+29 \, x^6y^3+x^3y=0
\end{equation}
intersects the real line $y=x+1$ in precisely eleven points in $\R^2$.

The strategy to construct this example is first to deduce from the proof of Theorem \ref{MainTh} some necessary conditions on the monomials of the desired equation.
Then, the use of real Grothendieck's dessins d'enfant~\cite{B,Br,O} helps to test the feasibility of certain monomials. Ultimately, computer experimentations lead to the
precise equation \eqref{E:equation}.

\section{Preliminary results}
We present some results of M. Avenda\~no~\cite{A} and add other ones.
Consider  a non-zero univariate polynomial $f(x) = \sum_{i=0}^d a_ix^i$ with real coefficients. Denote by
$V(f)$ the number of change signs in the ordered sequence $(a_0,\ldots,a_d)$ disregarding the zero terms. Recall that the famous Descartes' rule of signs asserts that
the number of (strictly) positive roots of $f$ counted with multiplicities does not exceed $V(f)$.
\smallskip

\begin{lemma}{~\cite{A}} \label{L:V}
We have $V((x+1)f)\leq\ V(f)$. 
\end{lemma}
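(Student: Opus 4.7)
The plan is a direct combinatorial comparison of the sign sequences of the coefficients of $f$ and of $(x+1) f$. I write $f(x) = \sum_{k=0}^{n} c_k x^{e_k}$ with $c_k \neq 0$ and $e_0 < \cdots < e_n$, and group the indices $\{0, \ldots, n\}$ into the $v+1$ maximal blocks of constant sign $B_0, B_1, \ldots, B_v$, where $v = V(f)$. I denote by $\sigma_i \in \{+1, -1\}$ the common sign of $c_k$ on $B_i$, so $\sigma_{i+1} = -\sigma_i$.

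Since the coefficient of $x^j$ in $(x+1) f$ equals $[x^{j-1}] f + [x^j] f$, it is either a single term $c_k$ (when exactly one of $j = e_k$ or $j = e_k + 1$ is realized) or a two-term sum $c_k + c_{k+1}$ (when $j = e_k + 1 = e_{k+1}$). Consequently, a coefficient of $(x+1) f$ assembled only from indices inside one block $B_i$ has sign $\sigma_i$ or vanishes, while a coefficient combining two consecutive blocks $B_i$ and $B_{i+1}$ at a shared boundary exponent has sign in $\{\sigma_i, -\sigma_i, 0\}$. Reading the support of $(x+1) f$ from low to high exponent thus produces a sequence of ``constant-sign stretches'', one for each sign block of $f$, separated by at most one ambiguous boundary coefficient.

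It then remains to show that each sign change appearing in the nonzero-coefficient sequence of $(x+1) f$ can be injectively matched to a block transition $B_i \to B_{i+1}$ of $f$. I plan to do this by a left-to-right scan that records which block of $f$ is contributing to the coefficient currently being read: a new sign change in $(x+1) f$ forces the scan to have crossed into a new block of $f$, so no two sign changes of $(x+1) f$ can be charged to the same block transition. Summing over the $v$ available transitions gives $V((x+1) f) \le v = V(f)$. The main obstacle is the bookkeeping in degenerate configurations in which a block consists of a single index whose range $[e_k, e_k + 1]$ is shared with both neighbors, or in which several consecutive boundary coefficients cancel (for instance $f = 1 - x + x^2 - x^3$, where $(x+1) f = 1 - x^4$): in such cases the sign-change-per-transition bound cannot be verified locally, and one must use the global injective accounting sketched above to conclude.
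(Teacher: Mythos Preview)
The paper does not actually prove this lemma: it is quoted from Avenda\~no's paper~\cite{A} and stated without argument, so there is nothing in the text to compare your attempt against.

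As for your sketch itself, the strategy is the standard one and is sound. You have correctly identified that every nonzero coefficient of $(x+1)f$ is built from indices lying in at most two consecutive sign blocks of $f$, and that the only subtle coefficients are the ``boundary'' sums $c_k+c_{k+1}$ straddling a block transition. What remains, as you say, is to turn the informal left-to-right scan into an honest injection, and this is where your write-up stops.

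One clean way to close the gap is the following labeling. For each exponent $j$ in the support of $(x+1)f$, let $K_j\subset\{0,\dots,n\}$ be the (one- or two-element) set of indices $k$ with $e_k\in\{j-1,j\}$, and set $\lambda(j)$ to be the unique block index $i$ with $B_i\cap K_j\neq\varnothing$ and $\sigma_i=\operatorname{sign}(b_j)$. Then $\operatorname{sign}(b_j)=\sigma_{\lambda(j)}$ by construction. Since for $j<j'$ one has $\min K_{j'}\ge\max K_j$ (because $e_{\min K_{j'}}\ge j'-1\ge j\ge e_{\max K_j}$), and the block index is weakly increasing in $k$, one gets
\[
\lambda(j)\;\le\;\text{block}(\max K_j)\;\le\;\text{block}(\min K_{j'})\;\le\;\lambda(j'),
\]
so $\lambda$ is weakly increasing along the support of $(x+1)f$. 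As the $\sigma_i$ alternate, every sign change in the $b$-sequence forces a strict jump of $\lambda$, hence there are at most $v=V(f)$ of them. This disposes of all the degenerate configurations you list (singleton blocks with shared exponents on both sides, runs of cancelling boundary coefficients such as $f=1-x+x^2-x^3$) in one stroke.
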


The following result is straighforward.
\smallskip

\begin{lemma}~\cite{A}\label{AvRem}
If $f, g\ \in \mathbb{R}[x]$ and $g$ has $t$ terms, then $V(f+g)\leq V(f) + 2t$.
\end{lemma}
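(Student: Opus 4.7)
The plan is to proceed by induction on $t$, reducing the general bound to the single-monomial case
\[
V(h + c x^k) \leq V(h) + 2 \qquad \text{for every } h \in \R[x],\ c \in \R,\ k \in \N.
\]
The base case $t = 0$ is immediate since then $g \equiv 0$ and $V(f+g) = V(f)$. For the inductive step, write $g = c x^k + g'$ where $g'$ has $t - 1$ nonzero terms; the inductive hypothesis gives $V(f + g') \leq V(f) + 2(t-1)$, and applying the single-monomial case with $h = f + g'$ yields
\[
V(f + g) = V(h + c x^k) \leq V(h) + 2 \leq V(f) + 2t.
\]

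To prove the single-monomial case, observe that the coefficient sequences of $h$ and $h + c x^k$ agree in every position other than $k$. Since $V$ is computed by first discarding zero coefficients and then counting sign alternations between consecutive entries, its value is determined locally by the pairs of adjacent nonzero entries in the sequence. Modifying the $k$-th coefficient --- whether nonzero to nonzero of opposite sign, nonzero to zero, or zero to nonzero --- can only affect the at most two alternation sites adjacent to position $k$, namely the one between position $k$ and the nearest nonzero coefficient to its left, and the analogous site on its right. Each such site contributes $0$ or $1$ to $V$, so $\lvert V(h + c x^k) - V(h) \rvert \leq 2$, which is the desired inequality.

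I do not anticipate a real obstacle here: the lemma is labelled straightforward in the text, and the argument just recorded is essentially forced. The only bookkeeping is the case split in the single-monomial step according to whether $h_k$ or $h_k + c$ vanishes; in each of the three nontrivial subcases one verifies directly that the local contribution to $V$ near position $k$ changes by at most $2$, which is the uniform bound needed.
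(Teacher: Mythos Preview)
Your proof is correct. The paper does not give a proof of this lemma at all, merely declaring it ``straightforward''; your induction on $t$ together with the local analysis of the at most two alternation sites adjacent to position $k$ is exactly the natural way to fill in the details, and there is no gap.
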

Denote by $\calN (h)$ the Newton polytope of a polynomial $h$ and by $\calNI (h)$ the interior of $\calN (h)$.
\smallskip

\begin{lemma}
\label{MyTh1}
If $f, g\ \in \mathbb{R}[X]$, $g$ has $t$ terms and $V(f+g)=V(f) + 2t$, then
$\calN (g)$ is contained in $\calNI (f)$.
\end{lemma}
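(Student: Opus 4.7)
The plan is to reduce to the base case $t=1$ and then analyze directly how $V$ changes when a single monomial is added to $f$.

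For the reduction, write $g = \sum_{i=1}^t c_i x^{k_i}$ and, for any subset $S \subseteq \{1,\ldots,t\}$, put $g_S = \sum_{i \in S} c_i x^{k_i}$. Two applications of Lemma~\ref{AvRem} yield
\[
V(f) + 2t \;=\; V\bigl((f + g_S) + g_{S^c}\bigr) \;\leq\; V(f + g_S) + 2(t - |S|) \;\leq\; V(f) + 2|S| + 2(t - |S|) \;=\; V(f) + 2t.
\]
Both inequalities must therefore be equalities. In particular, taking $S = \{i\}$ gives $V(f + c_i x^{k_i}) = V(f) + 2$ for every single term $c_i x^{k_i}$ of $g$.

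It thus suffices to prove the following base case: if $c x^k$ is a nonzero monomial and $V(f + cx^k) = V(f) + 2$, then $k$ lies in $\calNI(f)$. Writing $\calN(f) = [d_-, d_+]$, I argue by contradiction, showing that $k \notin (d_-, d_+)$ always forces $V(f + cx^k) \leq V(f) + 1$. If $k > d_+$, the coefficient vector of $f + cx^k$ is obtained from that of $f$ by appending an extra entry $c$ at position $k$, creating at most one new sign change (between positions $k$ and $d_+$); the case $k < d_-$ is symmetric. At the boundary $k = d_+$, the coefficient at $d_+$ becomes $a_{d_+} + c$: if it is nonzero, only the sign change with the next lower nonzero position of $f$ can be affected, so $V$ changes by at most one; if it is zero, the support of $f$ simply shrinks by one and $V$ cannot increase. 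The case $k = d_-$ is again symmetric. In each situation the increase in $V$ is at most one, contradicting the hypothesis. Applying the base case to each $c_i x^{k_i}$ gives $k_i \in (d_-, d_+)$ for all $i$, hence $\calN(g) = [\min_i k_i, \max_i k_i] \subseteq (d_-, d_+) = \calNI(f)$.

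The main technical obstacle is the boundary subcases $k = d_\pm$, where a new monomial can cancel an extreme term of $f$ and thereby shrink its support. The key point is that such a cancellation can only delete a sign change or flip a single sign, never producing two new sign changes; once this is checked, the subset trick above globalizes the local statement to arbitrary $t$.
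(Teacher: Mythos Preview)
Your proof is correct. It differs from the paper's in organization rather than in substance. The paper argues the contrapositive directly: assuming some extreme exponent of $g$ (say $\beta_1\le\alpha_1$) lies outside $\calNI(f)$, it peels that single term off via the obvious bound $V(f+g)\le 1+V(f+g-b_1x^{\beta_1})$ and then applies Lemma~\ref{AvRem} to the remaining $t-1$ terms of $g$ to get $V(f+g)\le V(f)+2t-1$. You instead first squeeze the two Lemma~\ref{AvRem} inequalities to force $V(f+c_ix^{k_i})=V(f)+2$ for every individual term of $g$, and only then carry out the one-monomial endpoint analysis. Both routes rest on the same elementary fact---that a monomial at or beyond an endpoint of $\calN(f)$ can raise $V$ by at most one---so the difference is purely structural. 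Your subset trick is slightly longer but yields the stronger intermediate statement that equality in Lemma~\ref{AvRem} propagates to every partial sum $g_S$; the paper's argument is the more economical one for this particular lemma.
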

 \begin{proof}
Assume that $\calN (g)$ is not contained in $\calNI (f)$. Writing $f(x)  = \sum_{i=1}^s{a_ix^{\alpha_i}}$ and $g(x) = \sum\limits_{j=1}^t{b_jx^{\beta_j}}$ with
$0\leq\alpha_1 < \cdots< \alpha_s$ and $0\leq \beta_1< \cdots < \beta_t$, we get $\beta_1 \leq \alpha_1$ or $\alpha_s \leq \beta_t$.
Assume that $\beta_1 \leq \alpha_1$ (the case $\alpha_s \leq \beta_t$ is symmetric). Then, obviously
$$V(f(x)+g(x)) \leq 1+V(f(x)+g(x)-b_1x^{\beta_1}).$$
By Lemma \ref{AvRem} we have
$$V(f(x)+g(x)-b_1x^{\beta_1}) \leq V(f)+2(t-1).$$
All together this gives $V(f+g) \leq 1+V(f)+2(t-1)=V(f)+2t-1$.
\end{proof}
\smallskip

\begin{proposition}{~\cite{A}}
\label{AvMainRes}
If $f\ \in \mathbb{R}[x,y]$ has $t$ non-zero terms, then
$$V(f(x,x+1))\leq 2t-2.$$
\end{proposition}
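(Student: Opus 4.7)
The plan is to proceed by induction on $t$, the number of nonzero terms of $f$.

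Base case $t=1$: writing $f(x,y) = c\,x^a y^b$ gives $f(x, x+1) = c\,x^a(x+1)^b$, whose nonzero coefficients all carry the sign of $c$, so $V(f(x,x+1)) = 0 = 2\cdot 1 - 2$.

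For the inductive step, assume the bound for polynomials with $t-1$ nonzero terms. Writing $f(x,y) = \sum_{i=1}^t c_i x^{a_i} y^{b_i}$, reorder the terms so that $b_1 = \min_i b_i$. Pulling out the common factor $(x+1)^{b_1}$ yields the polynomial identity
\[
f(x, x+1) = (x+1)^{b_1}\, g(x), \qquad g(x) = c_1 x^{a_1} + \sum_{i=2}^t c_i x^{a_i}(x+1)^{b_i - b_1},
\]
in which every exponent $b_i - b_1$ is nonnegative. Iterating Lemma \ref{L:V} gives $V(f(x,x+1)) \leq V(g)$, so it suffices to bound $V(g)$.

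Set $B(x) = \sum_{i=2}^t c_i x^{a_i}(x+1)^{b_i - b_1}$. Since the pairs $(a_i, b_i - b_1)$ for $i\geq 2$ are still pairwise distinct, we have $B(x) = \tilde h(x, x+1)$ for the polynomial $\tilde h(x,y) = \sum_{i=2}^t c_i x^{a_i} y^{b_i - b_1}$, which has exactly $t-1$ nonzero terms. The inductive hypothesis therefore gives $V(B) \leq 2(t-1) - 2 = 2t - 4$. Finally, Lemma \ref{AvRem} applied to the single-monomial summand $c_1 x^{a_1}$ shows $V(g) = V(c_1 x^{a_1} + B) \leq V(B) + 2 \leq 2t - 2$, closing the induction.

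I do not anticipate any substantial obstacle. The argument pivots on two ingredients already established above: Lemma \ref{L:V}, which lets us strip the factor $(x+1)^{b_1}$ at no cost in $V$, and Lemma \ref{AvRem} with a one-term summand, which bounds the damage from reinstating the $i=1$ contribution by $2$. The only point worth stressing is that the choice of $b_1$ as the \emph{minimum} exponent in $y$ is essential: it is what ensures that after the factorization the $i=1$ summand collapses to the bare monomial $c_1 x^{a_1}$, so that Lemma \ref{AvRem} yields the tight increment $+2$ rather than one proportional to some $b_i$.
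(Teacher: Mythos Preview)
Your proof is correct and follows essentially the same approach as the paper: both factor out the lowest power of $(x+1)$, invoke Lemma~\ref{L:V}, and then use Lemma~\ref{AvRem} to bound the cost of reinstating the peeled-off terms. The only cosmetic difference is granularity---the paper groups monomials sharing the same $y$-exponent and removes one group $a_k(x)$ per step (cost $2t_k$), while you remove a single monomial per inductive step (cost $2$); the telescoping sum gives $2t-2$ either way.
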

\begin{proof}
Write $f(x,y)=\sum_{k=1}^{n}{a_{k}(x)y^{\alpha_k}}$, with
$ 0 \leq\alpha_1< \cdots <\alpha_n$ and $\ a_k(x)\in\mathbb{R}[x]$. Denote by $t_{k}$ the number of non-zero terms of $a_k(x)$.
Define
$$f_k(x,y) = \sum_{j=k}^n a_j(x)y^{\alpha_j-\alpha_k} \, , \; k=1,\ldots,n,$$
and $f_{n+1}=0$. Then $f_k(x,x+1)=(x+1)^{\alpha_{k+1}-\alpha_k}f_{k+1}(x,x+1)+a_k(x)$
for $k=1, \ldots, n-1$ and $f_n(x,x+1)=a_n(x)$. Therefore, $V(f_k(x,x+1))\leq V(f_{k+1}(x,x+1)) + 2t_{k}$
by Lemma \ref{L:V} and Lemma \ref{AvRem}. Finally,
$V(f(x,x+1))\leq V(f_1(x,x+1))$
since
$f(x,x+1)=(x+1)^{\alpha_1}f_1(x,x+1)$.
We conclude that
$V(f(x,x+1)))\leq -2 + 2(t_{1}+\cdots+t_{n}) =2t-2$.
%
%
%
%
%
\end{proof}
%
%
%
%
%
\smallskip

\begin{proposition} \label{MyTh}
Let $f\ \in \mathbb{R}[x,y]$ be a polynomial with $t$ non-zero terms. Write it as
$
f(x,y)
=
\sum_{i=1}^t b_ix^{\beta_i}y^{\gamma_i}$
with $0 \leq \gamma_1 \leq \gamma_2 \leq \cdots \leq \gamma_t$.
If $V(f(x,x+1))=2t-2$, then
$$\calN(b_ix^{\beta_i}(x+1)^{\gamma_i}) \subset \calNI(b_tx^{\beta_t}(x+1)^{\gamma_t})$$
 (in other words,
$ \beta_t < \beta_i \leq \beta_i+\gamma_i < \beta_t+\gamma_t$) for $i=1,\ldots,t-1$.
%
%
%
%
\end{proposition}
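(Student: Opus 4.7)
The plan is to revisit the chain of inequalities used to prove Proposition~\ref{AvMainRes}, observe that $V(f(x,x+1))=2t-2$ forces every link of the chain to be an equality, and then iterate Lemma~\ref{MyTh1} along the induced recursion to extract the desired strict containment of Newton polytopes.

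\textbf{Step 1.} Group $f$ by $y$-exponents: $f(x,y)=\sum_{k=1}^{n}a_k(x)y^{\alpha_k}$ with $\alpha_1<\cdots<\alpha_n$ and $a_k(x)$ having $t_k$ nonzero terms, so $t_1+\cdots+t_n=t$. With the $f_k$ from the proof of Proposition~\ref{AvMainRes}, set
\[
h_k(x):=(x+1)^{\alpha_{k+1}-\alpha_k}f_{k+1}(x,x+1), \qquad 1\leq k\leq n-1.
\]
Combining $V(f_k(x,x+1))\leq V(h_k)+2t_k$ (Lemma~\ref{AvRem}), $V(h_k)\leq V(f_{k+1}(x,x+1))$ (Lemma~\ref{L:V}), Descartes' bound $V(a_n)\leq t_n-1$, and $V(f(x,x+1))\leq V(f_1(x,x+1))$ telescopes into
\[
V(f(x,x+1))\ \leq\ V(a_n)+2(t-t_n)\ \leq\ 2t-t_n-1.
\]
The hypothesis $V(f(x,x+1))=2t-2$ therefore forces $t_n=1$ (so $a_n(x)=b_t x^{\beta_t}$ and $\gamma_t>\gamma_i$ for every $i<t$) together with equality $V(f_k(x,x+1))=V(h_k)+2t_k$ in Lemma~\ref{AvRem} for every $k=1,\dots,n-1$. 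By Lemma~\ref{MyTh1}, this last equality translates into $\calN(a_k)\subset\calNI(h_k)$ for each such $k$.

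\textbf{Step 2.} I would prove by downward induction on $k\in\{1,\dots,n-1\}$ that $\calN(h_k)=[\beta_t,\ \beta_t+\gamma_t-\alpha_k]$. The base $k=n-1$ is immediate from $h_{n-1}(x)=b_t x^{\beta_t}(x+1)^{\gamma_t-\alpha_{n-1}}$. For the inductive step, write $f_{k+1}(x,x+1)=h_{k+1}(x)+a_{k+1}(x)$: the induction hypothesis gives $\calN(h_{k+1})=[\beta_t,\ \beta_t+\gamma_t-\alpha_{k+1}]$, while Step~1 places $\calN(a_{k+1})$ strictly inside this interval, so the two extreme monomials of $h_{k+1}$ cannot be cancelled and $\calN(f_{k+1}(x,x+1))=[\beta_t,\ \beta_t+\gamma_t-\alpha_{k+1}]$. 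Since $(x+1)^{\alpha_{k+1}-\alpha_k}$ has all positive coefficients, the Newton polytope of the product $h_k$ is the Minkowski sum $[0,\alpha_{k+1}-\alpha_k]+[\beta_t,\ \beta_t+\gamma_t-\alpha_{k+1}]=[\beta_t,\ \beta_t+\gamma_t-\alpha_k]$.

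\textbf{Step 3.} Fix $i<t$ and let $k<n$ be the unique index with $\gamma_i=\alpha_k$. Then $b_i x^{\beta_i}$ is one of the terms of $a_k$, so by Steps~1 and~2
\[
\beta_i\in\calN(a_k)\subset\calNI(h_k)=(\beta_t,\ \beta_t+\gamma_t-\gamma_i),
\]
which is exactly the desired $\beta_t<\beta_i$ and $\beta_i+\gamma_i<\beta_t+\gamma_t$. The delicate point that has to be handled with care is the no-cancellation argument in the inductive step: one must rule out that a term of $a_{k+1}$ kills an extreme monomial of $h_{k+1}$. This is precisely what the \emph{strict} interior containment $\calN(a_{k+1})\subset\calNI(h_{k+1})$ provided by Lemma~\ref{MyTh1} delivers, and why propagating strict (not merely weak) containment through the induction is essential.
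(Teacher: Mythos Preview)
Your proof is correct and follows essentially the same route as the paper: force all the inequalities in the chain from Proposition~\ref{AvMainRes} to be equalities, apply Lemma~\ref{MyTh1} to obtain $\calN(a_k)\subset\calNI(h_k)$ for each $k<n$, and then propagate the Newton polytopes down by induction (the paper phrases the induction as the interior inclusion $\calNI((x+1)^{\alpha_{k+1}}f_{k+1}(x,x+1))\subset\calNI(a_n(x)(x+1)^{\alpha_n})$ rather than the exact equality $\calN(h_k)=[\beta_t,\beta_t+\gamma_t-\alpha_k]$, but the underlying no-cancellation argument is identical). Your Step~1 is actually slightly more explicit than the paper in deriving $t_n=1$, which is needed to identify $a_n(x)(x+1)^{\alpha_n}$ with $b_tx^{\beta_t}(x+1)^{\gamma_t}$ in the final conclusion.
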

\begin{proof}
We use the proof of Proposition~\ref{AvMainRes} keeping its notations.
Write $f(x,y)=\sum_{k=1}^{n}{a_{k}(x)y^{\alpha_k}}$ with
$ 0 \leq\alpha_1< \cdots <\alpha_n$ and assume that $V(f(x,x+1))=2t-2$. It follows from the proof of Proposition~\ref{AvMainRes} that
\begin{equation}\label{E:sharp}
V(f_k(x,x+1))= V(f_{k+1}(x,x+1)) + 2t_{k} \,  , \quad k=1,\ldots,n.
\end{equation}
Recall that $f_k(x,x+1)=(x+1)^{\alpha_{k+1}-\alpha_k}f_{k+1}(x,x+1)+a_k(x)$ for $k \leq n-1$. By Lemma \ref{MyTh1} and \eqref{E:sharp} we get
$\calN(a_k(x)) \subset \calNI((x+1)^{\alpha_{k+1}-\alpha_k}f_{k+1}(x,x+1))$
and thus
\begin{equation} \label{NewtPol1}
\calN(a_k(x)(x+1)^{\alpha_{k}}) \subset \calNI((x+1)^{\alpha_{k+1}}f_{k+1}(x,x+1)) 
\end{equation}
for $k=1,\ldots,n-1$.
We now show by induction on $n-k \geq 1$ that
\begin{equation} \label{NewtPol5}
\calNI((x+1)^{\alpha_{k+1}}f_{k+1}(x,x+1))\ \subset \calNI(a_n(x)(x+1)^{\alpha_n}).
\end{equation}
Together with \eqref{NewtPol1} this will imply $\calN(a_k(x)(x+1)^{\alpha_{k}}) \subset
\calNI(a_n(x) (x+1)^{\alpha_{n}})$ for $k=1,\ldots,n-1$, and thus $\calN(b_ix^{\beta_i}(x+1)^{\gamma_i}) \subset \calNI(b_tx^{\beta_t}(x+1)^{\gamma_t})$ for $i=1,\ldots,t-1$.
For $n-k=1$ the inclusion \eqref{NewtPol5} is obvious. Since $f_k(x,x+1)=(x+1)^{\alpha_{k+1}-\alpha_k}f_{k+1}(x,x+1)+a_k(x)$ and
$\calN(a_k(x)) \subset \calNI((x+1)^{\alpha_{k+1}-\alpha_k}f_{k+1}(x,x+1))$, we get
$\calNI(f_k(x,x+1))=\calNI((x+1)^{\alpha_{k+1}-\alpha_k}f_{k+1}(x,x+1))$.
Assuming \eqref{NewtPol5} is true for $k$ (hypothesis induction), this immediately gives 
$\calNI((x+1)^{\alpha_{k}}f_k(x,x+1)) \subseteq \calNI(a_n(x)(x+1)^{\alpha_{n}})$
and thus \eqref{NewtPol5} is proved for $k-1$.
\end{proof}

\section{Proof of Theorem~\ref{MainTh}}

We first recall the proof of the bound $6t-4$ in~\cite{A}.
Let $f(x,y)= \sum_{i=1}^{t}{b_ix^{\beta_i}y^{\gamma_i}}\in\mathbb{R}[x,y]$
be a polynomial with at most $t$ non-zero terms, and let $a$, $b$ $\in \mathbb{R}$. Set
$g(x)=f(x,ax+b)$.  If $a=0$ or $b=0$, then $f$ has at most $t$ non-zero
 terms and Descartes' rule of signs implies that either $g=0$ or $g$ has at most $2t-1\leq 6t-4$ real roots (counted with multiplicities 
except for the possible root $0$). If $ab \neq 0$, then the real roots of 
$f(x,ax+b)$ correspond bijectively to the real roots of
$f(bx/a,b(x+1))=\hat{f}(x,x+1)$, where
$\hat{f}(x,y)=\sum_{i=1}^{t}{b_ia^{-\beta_i}b^{\beta_i+\gamma_i}x^{\beta_i}y^{\gamma_i}}$.
Since this bijection preserves multiplicities and maps the possible roots $0$ 
and $-b/a$ of $g$ to the roots $0$ and $-1$ of $\hat{f}(x,x+1)$, it suffices to consider the case 
$a=b=1$, i.e. $g(x)=f(x,x+1)$. So we now consider $g(x)=f(x,x+1)$.
Assume that $g\neq 0$ and denote by $d$ the degree of $g$.

Descartes' rule of signs and  Proposition~\ref{AvMainRes} imply that the number of positive roots of $g$ counted with 
multiplicities is at most $2t-2$.
The roots of $g$ in $]-\infty ,-1[$ correspond bijectively to the positive roots of $g(-1-x)=f(-1-x,-x)=\sum_{i=1}^{t}{b_i(-1)^{\beta_i+\gamma_i}x^{\gamma_i} (x+1)^{\beta_i}}$.
Therefore,  by Proposition~\ref{AvMainRes}  the number of roots (counted with 
multiplicities) of $g$ in $]-\infty , -1[$ cannot exceed $2t-2$.
Finally, the roots of $g$ in 
$]-1,0[$ correspond bijectively to the positive roots of $ (x+1)^{d}g(\frac{-x}{x+1})=
(x+1)^{d}f(\frac{-x} {x+1},\frac{1}{x+1})=\sum_{i=1}^{t}{b_i(-1)^{\beta_i}x^{\beta_i}(x+1)^{d-\beta_i-\gamma_i}}$.
%
Thus, by Proposition~\ref{AvMainRes} there are at most $2t-2$ such roots.
All together, this leads to the conclusion that $g$ has at most $3(2t-2)+2=6t -4$ real roots counted with multiplicities except for the possible roots 0 and $-1$ that are counted at most once.
\smallskip

We now start the proof of Theorem \ref{MainTh}. 
Set $I_1=]0,+\infty[$, $ I_2=]-\infty,-1[$ and $I_3=]-1,0[$.
%
%
For $h \in \R[x]$ define
$$V_{I_1}(h)=V(h) \, , \quad V_{I_2}(h)=V(h(-1-x)) \quad \mbox{and}$$
$$ 
V_{I_3}(h)=V((x+1)^{\deg(h)}h(\frac{-x} {x+1})).$$

By Descartes' rule of signs the number of roots of $h$ in $I_i$ does not exceed $V_{I_i}(h)$. To prove Theorem \ref{MainTh}, it suffices to show that
\begin{equation} \label{E:toshow}
V_{I_1}(g)+V_{I_2}(g)+V_{I_3}(g) \leq 3(2t-2)-3
\end{equation}
Define polynomials
$$ h_1(x)=x^dh(\frac{1}{x}) \; , \quad h_2(x)=(x+1)^dh(\frac{-x}{x+1})
\quad \mbox{and}  \quad h_3(x)=h(-1-x)
$$ 
so that $V_{I_1} (h_1)=V_{I_1}(h)$, $V_{I_1} (h_2)=V_{I_3}(h)$ and $V_{I_1} (h_3)=V_{I_2}(h)$.
\smallskip

\begin{lemma}\label{L:symmetry}
For any $i,j,k$ such that $\{i,j,k\}=\{1,2,3\}$, we have
$$V_{I_i}(h_i)=V_{I_i}(h) \quad\mbox{and} \quad V_{I_i}(h_j)=V_{I_k}(h)$$
\end{lemma}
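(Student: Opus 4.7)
The lemma packages nine equalities. The three with $i=1$ are observed in the paragraph immediately preceding the statement, so I will verify the remaining six by direct computation. The organising principle is that the three substitutions $x\mapsto 1/x$, $x\mapsto -x/(x+1)$, $x\mapsto -1-x$ are Möbius involutions stabilising $\{0,-1,\infty\}$; they generate an $S_3$-action permuting $I_1,I_2,I_3$. Crucially, $x\mapsto 1/x$ corresponds on polynomials to reversing the coefficient sequence, which preserves $V$.

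The central algebraic input I would establish first is that $A(x):=(x+1)^d h(-1/(x+1))$ and $B(x):=(x+1)^d h(-x/(x+1))$ satisfy $B(x)=x^d A(1/x)$; that is, they are reciprocals of one another. This is a one-line check via the identity $-1/(1/x+1)=-x/(x+1)$, and it yields $V(A)=V(B)$ because reversing the coefficient sequence preserves the count of sign variations.

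With this identity in hand, each remaining equality reduces to substituting one of $h_1,h_2,h_3$ into the formula defining $V_{I_2}$ or $V_{I_3}$ and recognising the result as the polynomial defining the target $V_{I_k}(h)$ up to a sign, a factor $x^{e}$, or reciprocation, all of which preserve $V$. For example, $h_2(-1-x)=(-1)^d x^d h(-1-1/x)$ is (up to sign) the reciprocal of $h(-1-x)$, so $V_{I_2}(h_2)=V_{I_2}(h)$; similarly $(x+1)^d h_3(-x/(x+1))=A(x)$, whose $V$ equals $V(B)=V_{I_3}(h)$ by the identity above. The other four computations are entirely analogous.

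The only real obstacle is bookkeeping: one must track degrees and clearing factors, especially when $h(0)=0$ or $h(-1)=0$ so that $\deg h_1$ or $\deg h_2$ drops below $d$. Since $0$ and $-1$ lie outside every $I_i$, these degenerate cases cause no essential difficulty; they can be handled by factoring out trivial $x^k$ or $(x+1)^k$ terms at the outset, or by replacing $d$ with the actual degree in the clearing factor throughout.
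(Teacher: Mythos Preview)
Your proposal is correct and follows essentially the same route as the paper: both verify the six remaining equalities by plugging $h_1,h_2,h_3$ into the M\"obius substitutions defining $V_{I_2}$ and $V_{I_3}$, simplifying, and using that reversing the coefficient sequence (i.e., passing to the reciprocal polynomial) preserves $V$. The only difference is organisational: you isolate the reciprocal identity $V(A)=V(B)$ for $A(x)=(x+1)^d h(-1/(x+1))$ and $B(x)=(x+1)^d h(-x/(x+1))$ as a preliminary lemma, whereas the paper derives the same fact inline during the computation of $V_{I_2}(h_1)$ and reuses it for $V_{I_3}(h_3)$.
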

\begin{proof}
We have $h_1(-x-1)=(-1)^d (x+1)^d h(-\frac{1}{x+1})$. Thus
$V(h_1(-x-1))=V((x^{-1}+1)^d h(-\frac{1}{x^{-1}+1}))=V((\frac{x+1}{x})^dh(-\frac{x}{x+1}))=
V((x+1)^dh(-\frac{x}{x+1}))$, and we get $V_{I_2}(h_1)=V_{I_3}(h)$.
We have $(x+1)^d h_1(-\frac{x}{x+1})=(-x)^dh(-1-x^{-1})$ from which we obtain $V_{I_3}(h_1)=V_{I_2}(h)$.
\smallskip 

Equalities $V_{I_2}(h_2)=V_{I_2}(h)$ and $V_{I_3}(h_2)=V_{I_1}(h)$ follow from $h_2(-1-x)=(-x)^d h(-1-x^{-1})$
and $(x+1)^dh_2(-\frac{x}{x+1})=h(x)$.
\smallskip

Finally, $V_{I_2}(h_3)=V_{I_1}(h)$ comes from $h_3(-x-1)=h(x)$ and
$V_{I_3}(h_3)=V_{I_3}(h)$ is a consequence of $(x+1)^dh_3(-\frac{x}{x+1})=(x+1)^d h(-\frac{1}{x+1})$ and the equality
$V((x+1)^d h(-\frac{1}{x+1}))=V_{I_3}(h)$ shown above. 
\end{proof}

We now proceed to the proof of \eqref{E:toshow}.
We already know that $V_{I_i}(g) \leq 2t-2$ for $i=1,2,3$.
If $V_{I_i}(g) \leq 2t-3$ for all $i$, then \eqref{E:toshow} is trivially true.
With the help of Lemma \ref{L:symmetry}, it suffices now to show that if $V_{I_1}(g)=2t-2$ then
$V_{I_2}(g) \leq 2t-3$, $V_{I_3}(g) \leq 2t-3$, and $V_{I_2}(g)+V_{I_3}(g)< 2(2t-3)$.
So assume $V_{I_1}(g)= 2t-2$. Then by Proposition \ref{MyTh}
\begin{equation}\label{E:useful}
\beta_t < \beta_i \leq \beta_i+\gamma_i < \beta_t+\gamma_t,\;   , \quad  i=1,\ldots,t-1.
\end{equation}
We have $g(-1-x)=\sum_{i=1}^{t}
{b_i(-1)^{\beta_i+\gamma_i}x^{\gamma_i}(x+1)^{\beta_i}}$.
Recall that $V_{I_2}(g)=V(g(-x-1)) \leq 2t-2$ by Proposition \ref{AvMainRes}.
From \eqref{E:useful}, we get $\gamma_t >\gamma_i$ for $i=1,\ldots,t-1$. It follows then from Proposition \ref{MyTh} that $V(g(-x-1)) \leq 2t-3$.

Write
$g(-1-x)=\tilde{g}(-x-1)+ b_t(-1)^{\beta_t+\gamma_t}x^{\gamma_t}(x+1)^{\beta_t}$, and then
$g(-1-x)(x+1)^{-\beta_t}=\tilde{g}(-x-1)(x+1)^{-\beta_t}+ b_t(-1)^{\beta_t+\gamma_t} x^{\gamma_t}$.
We note that \eqref{E:useful} implies $\beta_t < \beta_i$ for $i=1,\ldots,t-1$, so that both members of the previous equality are polynomials.
Moreover, from \eqref{E:useful} we also get $\beta_i-\beta_t+\gamma_i < \gamma_t$, and thus $\gamma_t$ does not belong to the Newton polytope of
the polynomial $\tilde{g}(-x-1)(x+1)^{-\beta_t}$. It follows that $V(g(-1-x)(x+1)^{-\beta_t}) \leq V(\tilde{g}(-x-1)(x+1)^{-\beta_t})+1$.
By Lemma \ref{L:V} we have $V(g(-1-x)) \leq V(g(-x-1)(x+1)^{-\beta_t})$. Therefore, $V(g(-1-x)) \leq V(\tilde{g}(-x-1)(x+1)^{-\beta_t})+1$.
On the other hand Proposition \ref{AvMainRes} yields $V(\tilde{g}(-x-1)(x+1)^{-\beta_t}) \leq 2(t-1)-2=2t-4$.

Therefore, if $V(g(-1-x))=2t-3$, then $V(\tilde{g}(-x-1)(x+1)^{-\beta_t})=2t-4$, and we may apply Proposition \ref{MyTh} to $\tilde{g}(-x-1)(x+1)^{-\beta_t}$
in order to get

 \begin{equation}\label{i0}
\gamma_{i_0} < \gamma_i \leq \gamma_i+\beta_i < \gamma_{i_0}+\beta_{i_0} \; \mbox{for all} \; i=1,\ldots,t-1 \; \mbox{and} \; i \neq i_0,
\end{equation}
where $i_0$ is determined by $\beta_{i_0} \geq \beta_i$ for $i=1,\ldots,t-1$.

%
%
%
%

Starting with $g_1(x)=x^d g(1/x)=\sum_{i=1}^t b_i x^{d-\beta_i-\gamma_i}(x+1)^{\gamma_i}$ instead of $g$ in the previous computation, we obtain that
if $V(g_1)=2t-2$ then $V_{I_2}(g_1) \leq 2t-3$ and if
$V_{I_2}(g_1)=2t-3$, then the substitution of $d-\beta_i-\gamma_i$ for $\beta_i$ in \eqref{i0} holds true:
\begin{equation}\label{i1}
\gamma_{i_1} < \gamma_i \leq d-\beta_i < d-\beta_{i_1} \; \mbox{for all} \; i=1,\ldots,t-1 \; \mbox{and} \; i \neq i_1,
\end{equation}
where $i_1$ is determined by $d-\beta_{i_1}-\gamma_{i_1} \geq d-\beta_i-\gamma_i$ for $i=1,\ldots,t-1$. 

On the other hand, $V(g)=V(g_1)$ and $V(g_1(-x-1))=V_{I_2}(g_1)=V_{I_3}(g)$ by Lemma \ref{L:symmetry}.
Thus if $V(g)=2t-2$ then $V_{I_3}(g) \leq 2t-3$ and if $V_{I_3}(g)=2t-3$, then formula \eqref{i1} holds true.
It turns out that \eqref{i0} and \eqref{i1} are incompatible. 
Indeed, if \eqref{i0} and \eqref{i1} hold true simultaneously, then $i_0=i_1$ but then \eqref{i1} implies that $\gamma_{i_0}+\beta_{i_0} < \gamma_i+\beta_i $
for all $1,\ldots,t-1$ with $i \neq i_0$ which contradicts \eqref{i0}.
Consequently, if $V(g)=V_{I_1}(g)=2t-2$, then $V_{I_2}(g) \leq 2t-3$, $V_{I_3}(g) \leq 2t-3$ and $V_{I_2}(g) +V_{I_3}(g) <2(2t-3)$. 

\section{Optimality}

We prove that the bound in Theorem \ref{MainTh} is sharp for $t=3$ (Theorem \ref{T:optimal}).
We look for
%
%
a polynomial $P \in \R[x,y]$ with three non-zero terms such that $P(x,x+1)$ has nine real roots distinct from $0$ and $-1$. It follows from the previous section that
if such $P$ exists then, either $P(x,x+1)$ has three roots in each interval $I_1$, $I_2$ and $I_3$, or $P(x,x+1)$ has four roots in one interval, three roots in another interval, and two roots in the last one. We give necessary conditions for the second case, which thanks to Lemma \ref{L:symmetry} reduces to the case where
$P(x , x + 1)$ has four roots in $I_1=]0,+\infty[$, three roots in $I_3=]-1 ,0[$ and two roots in $I_2=]-\infty,-1[$.

Multiplication of $P$ by a monomial does not alter the roots of $P(x,x+1)$ in $\R \setminus \{0,-1\}$, so dividing by the smallest power of $x$,
we may assume that $P$ has the following form
$$ P(x,y) = ay^{l_1} + bx^{k_2}y^{l_2} + x^{k_3}y^{l_3},$$
where $k_2$, $k_3$, $l_1$, $l_2$, $l_3$ are nonnegative integer numbers and $a,b$ are real numbers.
\smallskip

 \begin{lemma} \label{L:tech1}
If $P(x,x + 1)$ has four real positive roots, then $k_2 >0$,  $k_3 >0$, $ l_1 >l_2 + k_2$ and
$l_1>l_3 + k_3$.
 \end{lemma}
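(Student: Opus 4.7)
The plan is to reduce the lemma directly to Proposition \ref{MyTh}. Since $P$ has $t=3$ non-zero terms, Proposition \ref{AvMainRes} gives $V(P(x,x+1)) \leq 2t-2 = 4$. If $P(x,x+1)$ has four positive real roots, Descartes' rule of signs forces $V(P(x,x+1)) \geq 4$, so equality $V(P(x,x+1)) = 2t-2$ holds, which is precisely the hypothesis of Proposition \ref{MyTh}.

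I then apply Proposition \ref{MyTh} to $P(x,y) = ay^{l_1} + bx^{k_2}y^{l_2} + x^{k_3}y^{l_3}$, whose three monomials correspond to the exponent pairs $(\beta,\gamma) \in \{(0,l_1),\, (k_2,l_2),\, (k_3,l_3)\}$. After ordering them so that $\gamma_1 \leq \gamma_2 \leq \gamma_3$, the proposition asserts that the ``dominant'' monomial $(\beta_3,\gamma_3)$ satisfies $\beta_3 < \beta_i$ and $\beta_i + \gamma_i < \beta_3 + \gamma_3$ for $i=1,2$.

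The key step is to identify which of the three monomials is dominant. Since each $x$-exponent in $\{0,k_2,k_3\}$ is nonnegative, the strict inequality $\beta_3 < \beta_i$ for every $i \neq 3$ can only hold if $\beta_3 = 0$, and the only monomial with zero $x$-exponent is $ay^{l_1}$. Hence the dominant monomial must be $(0,l_1)$. Ties among the $\gamma$-values deserve attention: if $l_1$ were not strictly larger than both $l_2$ and $l_3$, one could reorder so that $(k_2,l_2)$ or $(k_3,l_3)$ becomes the dominant term, forcing $k_2 < 0$ or $k_3 < 0$, contradicting nonnegativity. Therefore $l_1 > l_2$ and $l_1 > l_3$, and $(0,l_1)$ is unambiguously the dominant monomial.

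With $(\beta_3,\gamma_3) = (0,l_1)$ pinned down, the two strict inclusions from Proposition \ref{MyTh} read $0 < k_2$, $k_2 + l_2 < l_1$, $0 < k_3$, and $k_3 + l_3 < l_1$, which are exactly the four inequalities claimed by the lemma. The only real obstacle in this plan is the case analysis needed to rule out ties among the $y$-exponents, and this is routine given the sign constraints on the $x$-exponents.
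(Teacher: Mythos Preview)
Your proof is correct and follows essentially the same route as the paper: observe that four positive roots force $V(P(x,x+1))=4=2t-2$, apply Proposition~\ref{MyTh}, and use the nonnegativity of the $x$-exponents to conclude that the term with $\beta_3=0$ (namely $ay^{l_1}$) must be the dominant one, from which the four inequalities follow. Your extra paragraph on ties among the $y$-exponents is more careful than the paper's argument, which simply picks an ordering $\gamma_1\leq\gamma_2\leq\gamma_3$ and reads off $\beta_3=0$; in fact the hypothesis $V=2t-2$ already rules out a tie at the top (otherwise the bound in Proposition~\ref{AvMainRes} would drop below $2t-2$), so this case analysis is harmless but not strictly needed.
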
 
  \begin{proof}
If $P(x,x + 1)$ has four real positive roots, then $V(P(x , x + 1)) = 4$.
Rewriting $P(x , x + 1) =  \sum_{i=1}^3 b_ix^{\beta_i}(x + 1)^{\gamma_i}$ with $0 \leq \gamma_1 \leq 
 \gamma_2 \leq \gamma_3$, Proposition~\ref{MyTh} yields $\beta_3 < \beta_i \leq \beta_i + \gamma_i < \beta_3 + \gamma_3$ for 
  $i = 1,2$. Since $k_2$ and $k_3$ are nonnegative, we get $\beta_3 = 0$, $k_2,k_3 >0$ and $\beta_3 + \gamma_3 = \gamma_3 = l_1$, so $l_1 >\max (l_2 + k_2 , l_3 + k_3)$.
 \end{proof}

Since $l_1>l_2$ and  $l_1 > l_3$,  we may divide $P(x,x+1)$ by $(x+1)^{l_2}$ or $(x+1)^{l_3}$ to get a polynomial equation  with the same solutions
in $\R \setminus \{0,-1\}$. So without loss of generality we may assume that
 \begin{equation} \label{Preduced}
 P(x,x+1)=     a(x+1)^{l_1} + bx^{k_2}(x+1)^{l_2} + x^{k_3},
   \end{equation}   
 where $k_2, k_3 >0$, $l_2 \geq 0$, $l_1>k_2+l_2$ and $l_1 >k_3$.
\smallskip

\begin{lemma} \label{NPlem}
Assume that the polynomial \eqref{Preduced} has four roots in $I_1$,  and three roots in $I_3$ or $I_2$. Then $k_3$
 does not belong to the interval $[k_2,k_2+l_2]$. Moreover, we have $a<0$ and $b>0$.
\end{lemma}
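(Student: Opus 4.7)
My plan has two stages. First, I will show $k_3 \notin [k_2, k_2 + l_2]$ by invoking the sharp conditions \eqref{i0} and \eqref{i1} established inside the proof of Theorem~\ref{MainTh}. Then I will pin down the signs of $a$ and $b$ via a careful count of the sign changes of the coefficients of $P(x, x+1)$, using that $V(P(x, x+1)) = 4$.

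For the first stage, order the three terms of $P(x, x+1)$ as $b_i x^{\beta_i}(x+1)^{\gamma_i}$ with $\gamma_1 \le \gamma_2 \le \gamma_3$. By Lemma~\ref{L:tech1}, the dominant term is $(a, 0, l_1)$, the other two being $(1, k_3, 0)$ and $(b, k_2, l_2)$. If $P(x, x+1)$ has three roots in $I_3$, then $V_{I_3}(P(x, x+1)) = 3 = 2t - 3$ and condition \eqref{i1} applies with $d = l_1$: the index $i_1 \in \{1, 2\}$ maximizing $d - \beta_i - \gamma_i$ must satisfy $\gamma_{i_1} < \gamma_i$ for the other index. With $\gamma_1 = 0$ and $\gamma_2 = l_2 \ge 0$, this forces $i_1 = 1$, and therefore $l_2 > 0$ together with $l_1 - k_2 < l_1 - k_3$, i.e.\ $k_3 < k_2$. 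Symmetrically, three roots in $I_2$ together with \eqref{i0} give $k_3 > k_2 + l_2$ (and $l_2 > 0$). In either subcase $k_3 \notin [k_2, k_2+l_2]$.

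For the second stage, let $c_m$ denote the coefficient of $x^m$ in $P(x, x+1)$. Since $k_3 \notin [k_2, k_2+l_2]$, the three terms contribute to $c_m$ in disjoint zones: outside $\{k_3\} \cup [k_2, k_2+l_2]$, $c_m = a\binom{l_1}{m}$ has sign $\sign(a)$; at $m = k_3$, $c_{k_3} = a\binom{l_1}{k_3} + 1$; on the ``Block III'' $[k_2, k_2+l_2]$, $c_m = \binom{l_1}{m}\,[a + bf(m)]$ with $f(m) = \binom{l_2}{m-k_2}/\binom{l_1}{m}$. The technical core is that $f$ is discretely unimodal with a maximum (or monotonic): the sign of $f(m+1)/f(m) - 1$ coincides with that of the linear expression $(l_2 - l_1)m + [l_2 + k_2(l_1+1) - l_1]$, which is strictly decreasing in $m$ since $l_2 < l_1$. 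Thus $\{m \in [k_2, k_2+l_2] : f(m) > c\}$ is a contiguous subinterval for every constant $c$. This rules out a Block III sign pattern of type $\sigma, \tau, \sigma$ with $\sigma \neq \sign(a)$: such a pattern would require $f$ to be small at the endpoints of Block III and large in the middle, contradicting the direction of unimodality. In every remaining case, Block III, counted together with its entry/exit boundary transitions to $\sign(a)$ outside, contributes at most two sign changes to $V(P(x, x+1))$.

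The isolated position $m = k_3$ contributes $0$ or $2$ sign changes to $V$, according to whether $\sign(c_{k_3}) = \sign(a)$ or not. Hence $V(P(x, x+1)) = 4$ forces both $k_3$ and Block III to contribute exactly $2$. A contribution of $2$ at $k_3$ requires $c_{k_3} = a\binom{l_1}{k_3} + 1$ to have sign opposite to $a$, which is impossible for $a > 0$ (both summands would be positive), so $a < 0$. Given $a < 0$, Block III contributes $2$ only if $a + bf(m) > 0$ for some $m \in [k_2, k_2+l_2]$, and since $a < 0$ and $f > 0$, this forces $b > 0$. The main obstacle is the discrete unimodality of $f$: though it reduces to checking the sign of a linear function, it is precisely what caps Block III's sign-change budget at two, and is thus what ultimately drives the conclusions on $a$ and $b$.
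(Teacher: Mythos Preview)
Your proof is correct, but it takes a genuinely different route from the paper's in both stages.

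For the first part, the paper argues directly: assuming $k_2 \leq k_3 \leq k_2+l_2$, it applies Proposition~\ref{MyTh} to the two-term subsum in each of the transformed polynomials $P(-x-1,-x)$ and $(1+x)^{l_1}P(\tfrac{-x}{x+1},\tfrac{1}{x+1})$ to force $V_{I_2}\leq 2$ and $V_{I_3}\leq 2$. You instead invoke the sharp conditions \eqref{i0} and \eqref{i1} already established in the proof of Theorem~\ref{MainTh}. This is slick and actually yields more: you get $k_3<k_2$ in the $I_3$ case and $k_3>k_2+l_2$ in the $I_2$ case (and $l_2>0$), not merely $k_3\notin[k_2,k_2+l_2]$. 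One small point you glide over is that three roots in $I_3$ gives a priori only $V_{I_3}\geq 3$; you need the inequality $V_{I_3}\leq 2t-3$ (derived earlier from $V_{I_1}=2t-2$) to pin down $V_{I_3}=3$ before invoking \eqref{i1}.

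For the second part, the paper's argument is much shorter: from $ab<0$ (if $a,b$ have the same sign, an easy sign check gives $V\leq 2$), it observes that when $a>0$ the term $x^{k_3}$ adds a positive quantity to an already positive coefficient, so $V(P(x,x+1))=V\bigl(a(x+1)^{l_1}+bx^{k_2}(x+1)^{l_2}\bigr)\leq 2$ by Proposition~\ref{AvMainRes}, contradicting $V=4$. Your route through the discrete unimodality of $f(m)=\binom{l_2}{m-k_2}/\binom{l_1}{m}$ is correct --- the ratio $f(m+1)/f(m)-1$ does have the sign of a linear function in $m$ with negative leading coefficient --- and it does cap the Block~III contribution at two sign changes, but it is considerably heavier machinery for the same conclusion. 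Your decomposition into ``$k_3$ contributes $0$ or $2$'' and ``Block~III contributes at most $2$'' is slightly imprecise when $k_3$ is adjacent to Block~III (then the two contributions are not independent), but a short case check shows the conclusions $a<0$, $b>0$ survive in that boundary situation as well.
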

\begin{proof}
We prove that if $k_2 \leq k_3 \leq k_2 + l_2$, then \eqref{Preduced} 
has at most two roots in $I_2$ and in $I_3$.

The roots in $I_2$ are in bijection with the positive roots of
$$\displaystyle P(-x -1 , -x) = (-1)^{l_1}ax^{l_1} + (-1)^{k_2 + l_2}bx^{l_2}(x + 1)^{k_2} + 
   (-1)^{k_3}(1 + x)^{k_3}.$$
Recall that $l_2 \geq 0$. If  $k_2 \leq k_3 \leq k_2+l_2$ then 
Proposition \ref{MyTh} yields $V((-1)^{k_2 + l_2}bx^{l_2}(x + 1)^{k_2} + (-1)^{k_3}(1 + x)^{k_3}) \leq 1$.
Now, since $l_1 > k_2 + l_2$ and $l_1 >k_3$, we get $V( P(-x -1 , -x)) \leq 2$, and thus \eqref{Preduced} has at most two roots in $I_2$.

The roots in $I_3$ are in bijection with the positive roots of
$$(1 + x)^{l_1}P(\frac{-x}{x + 1} , \frac{-x}{x + 1} + 1 )=
 a + b(-1)^{k_2}x^{k_2}(1 + x)^{l_1 - k_2 -l_2} + 
 (-1)^{k_3}x^{k_3}(1 + x)^{l_1 - k_3}$$
 From $k_3 \leq k_2+l_2$, we get $l_1-k_2-l_2 \leq l_1-k_3$. Thus, Proposition \ref{MyTh} together with $k_2 \leq k_3$ yields
$V(b(-1)^{k_2}x^{k_2}(1 + x)^{l_1 - k_2 -l_2} + 
    \displaystyle (-1)^{k_3}x^{k_3}(1 + x)^{l_1 - k_3})$\ $\leq 1$.
From $k_2,k_3>0$ we get $V((1 + x)^{l_1}P(\frac{-x}{x + 1} , \frac{-x}{x + 1} + 1 ))$ \ $\leq 2$, and thus \eqref{Preduced} has at most two roots in $I_3$.

Finally, if \eqref{Preduced}  has four positive roots, then obviously $ab<0$.
If $k_3$ does not belong to $[k_2,k_2+l_2]$ and $a>0$, then
$V((x+1)^{l_1} + bx^{k_2}(x+1)^{l_2} + x^{k_3})=V((x+1)^{l_1} + bx^{k_2}(x+1)^{l_2})$ (recall that $k_2 \leq k_2+l_2 < l_1$). But the second sign variation is a most two by Proposition \ref{AvMainRes}. We conclude that $a<0$ and $b>0$.
\end{proof}
\smallskip

\begin{lemma}\label{L:parity}
Assume that the polynomial \eqref{Preduced} has four roots in $I_1$,  two roots in $I_2$ and three roots in $I_3$.
Assume furthermore that $k_3 < k_2$.
Then, $l_1$ is odd, $k_2$ is odd, $k_3$ is even and $l_2$ is even.
\end{lemma}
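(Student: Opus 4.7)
The plan is to combine direct sign evaluations of $P(x,x+1)$ at $x=0$, $x=-1$ and $x=-\infty$ (which pin down the parities of $k_3$ and $l_1$) with a Descartes-style analysis of the two auxiliary polynomials whose positive roots correspond bijectively (with multiplicities) to the roots of $P(x,x+1)$ in $I_3$ and in $I_2$, namely
\[
Q(x) \;=\; (x+1)^{l_1}\,P\!\left(\tfrac{-x}{x+1},\;\tfrac{1}{x+1}\right), \qquad R(x) \;=\; P(-1-x,\,-x).
\]
The key elementary observation used throughout is that every monomial of the form $x^{\beta}(x+1)^{\gamma}$ with $\beta,\gamma\geq 0$ has nonnegative coefficients, so the sign pattern of each of $P(x,x+1)$, $Q(x)$ and $R(x)$ is controlled coefficient-by-coefficient by the scalar coefficients of the three individual summands. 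I use throughout the facts $a<0$, $b>0$ and $k_3<k_2\leq k_2+l_2<l_1$ supplied by Lemmas \ref{L:tech1} and \ref{NPlem}.

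First, $P(0,1)=a<0$ and $P(-1,0)=(-1)^{k_3}$. The parity of the number of sign changes of a continuous real function on an interval equals the parity of the number of its roots in that interval counted with multiplicities. Three roots in $I_3=(-1,0)$ therefore force $(-1)^{k_3}$ and $a$ to have opposite signs, so $k_3$ is even. Next, the leading term of $P(x,x+1)$ is $ax^{l_1}$, whose value at $-\infty$ has sign $(-1)^{l_1}\operatorname{sign}(a)$; the two roots in $I_2=(-\infty,-1)$ force this sign to agree with $\operatorname{sign} P(-1,0)=(-1)^{k_3}=+1$, and since $a<0$ this forces $l_1$ odd.

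For the parity of $k_2$, using $k_3$ even we expand
\[
Q(x) \;=\; a \;+\; b(-1)^{k_2}x^{k_2}(x+1)^{l_1-k_2-l_2} \;+\; x^{k_3}(x+1)^{l_1-k_3},
\]
and the three $I_3$-roots give $V(Q)\geq 3$ by Descartes. The coefficient of $x^j$ in $Q$ is $a<0$ at $j=0$; zero for $1\leq j\leq k_3-1$; the strictly positive binomial $\binom{l_1-k_3}{j-k_3}$ for $j\in[k_3,k_2-1]\cup[l_1-l_2+1,l_1]$ (only the third summand contributes on these ranges); and $\binom{l_1-k_3}{j-k_3}+b(-1)^{k_2}\binom{l_1-k_2-l_2}{j-k_2}$ on the mixed block $j\in[k_2,l_1-l_2]$. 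If $k_2$ were even, every coefficient of the mixed block would be positive (since $b>0$), so the sign sequence of $Q$ would read $-,0,\dots,0,+,\dots,+$ and $V(Q)\leq 1$, contradicting $V(Q)\geq 3$; hence $k_2$ is odd. For the parity of $l_2$, using $l_1$ odd and $k_3$ even we expand
\[
R(x) \;=\; -a\,x^{l_1} \;+\; (-1)^{k_2+l_2}b\,x^{l_2}(x+1)^{k_2} \;+\; (x+1)^{k_3},
\]
and the two $I_2$-roots give $V(R)\geq 2$. If $k_2+l_2$ were even, every summand of $R$ would contribute only nonnegative coefficients (using $-a>0$ and $b>0$), making every coefficient of $R$ nonnegative and $V(R)=0$, a contradiction; therefore $k_2+l_2$ is odd, and with $k_2$ odd this gives $l_2$ even.

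The calculations are essentially mechanical; the principal obstacle when writing everything out is simply identifying the "mixed" coefficient block in each of $Q$ and $R$ correctly from the exponent constraints and exploiting the signs $a<0,\;b>0$ provided by Lemma \ref{NPlem}. The punchline for both $k_2$ and $l_2$ is identical: when the parity is wrong, the mixed block collapses to a sum of positives, killing every sign variation that the $I_3$- or $I_2$-root count demands.
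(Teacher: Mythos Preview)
Your proof is correct and follows essentially the same route as the paper's: both reduce the parities of $k_2$ and $l_2$ to a Descartes sign-variation count on the two auxiliary polynomials $Q$ and $R$, and both finish by combining $k_2$ odd with $k_2+l_2$ odd. The only notable difference is in how you obtain $k_3$ even and $l_1$ odd. You read these off directly from the signs of $P(x,x+1)$ at $x=0$, $x=-1$ and $x\to -\infty$ together with the parity of the root count on $I_3$ and on $I_2$; the paper instead gets $l_1$ odd from the fact that a real polynomial with exactly nine real roots (with multiplicity) has odd degree, and extracts $k_3$ even as part of the analysis of $R$ (looking at the signs of the three ``blocks'' $(-1)^{k_3}$, $(-1)^{k_2+l_2}b$, $(-1)^{l_1}a$). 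Your endpoint-evaluation argument is a touch more self-contained and does not use the total root count, and your version of the $R$-step (if $k_2+l_2$ were even then every coefficient of $R$ is nonnegative, so $V(R)=0$) is cleaner than isolating individual coefficients; otherwise the two arguments are interchangeable.
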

\begin{proof}
Since \eqref{Preduced}  has exactly nine real roots counted with multiplicity, its degree $l_1$ is odd.
We have already seen that if \eqref{Preduced} has four roots in $I_1=]0,+\infty[$,  two roots in $I_2=]-\infty,-1[$ and three roots in $I_3=]-1 ,0[$,
then $a<0$, $b>0$, $l_1>l_2 $ and $k_3 \notin [k_2,k_2+l_2]$. Assume from now on that $k_3 < k_2$.

Since \eqref{Preduced} has two roots in $I_2=]-\infty,-1[$, we have
$V(P(-x -1 , -x)) \geq 2$, where $P(-x -1 , -x)=(-1)^{k_3}(1 + x)^{k_3} + (-1)^{k_2 + l_2}bx^{l_2}(x + 1)^{k_2} +  (-1)^{l_1}ax^{l_1}$.
But since $k_3 < k_2 \leq k_2+l_2 < l_1$, we get that $(-1)^{k_3} \cdot (-1)^{k_2 + l_2}b<0$ and $(-1)^{k_2 + l_2}b \cdot (-1)^{l_1}a <0$.
Using $a<0$ and $b>0$, we obtain that $k_2+l_2$ is odd and $k_3$ is even.

Since \eqref{Preduced} has three roots in $I_3=]-1 ,0[$, we have $V((1 + x)^{l_1}P(\frac{-x}{x + 1} , \frac{-x}{x + 1} + 1 )) \geq 3$, where
$(1 + x)^{l_1}P(\frac{-x}{x + 1} , \frac{-x}{x + 1} + 1 )=  a + b(-1)^{k_2}x^{k_2}(1 + x)^{l_1 - k_2 -l_2} + 
 (-1)^{k_3}x^{k_3}(1 + x)^{l_1 - k_3 -l_3}$. We know that $k_3$ is even and that $b>0$. Thus in order to get coefficients with different signs in
$b(-1)^{k_2}x^{k_2}(1 + x)^{l_1 - k_2 -l_2} + 
 (-1)^{k_3}x^{k_3}(1 + x)^{l_1 - k_3 -l_3}$, the integer $k_2$ should be odd. Since we know that $k_2+l_2$ is odd, this gives that $l_2$ is even.
 \end{proof}

Assume now that \eqref{Preduced} has four roots in $I_1$,  two roots in $I_2$ and three roots in $I_3$.
Then $a<0$, $b>0$ and $k_3$ does not belong to $[k_2,k_2+l_2]$ by Lemma \ref{NPlem}.
Assume that $k_3 < k_2$. Then $l_1$ is odd, $k_2$ is odd, $k_3$ is even and $l_2$ is even by Lemma \ref{L:parity}.
The roots of \eqref{Preduced} are solutions to the equation
$f(x)=-a$, where $\displaystyle f(x)= bx^{k_2}(1 + x)^{l_2 - l_1} + x^{k_3}(1 + x)^{- l_1}$. Since the rational function $f$ has no pole outside $\{-1,0\}$, by Rolle's Theorem
its derivative has at least three roots in $I_1$, one root in $I_2$ and two roots in $I_3$. We compute that $f'(x)=0$ is equivalent to $\Phi(x)=1$, where $\Phi$ is the rational map
\begin{equation}\label{E:fraction}
\Phi(x)=\frac{-bx^{k_2 - k_3}(1 + x)^{l_2}A_1(x)}{A_2(x)},
\end{equation}
with $A_1(x)=(k_2 + l_2 - l_1)x+k_2$ and $A_2(x)=(k_3 - l_1)x+k_3$.
From $0 <k_3<k_2$, $l_2 \geq 0$ and $l_1>0$, we obtain that
the roots of $A_1$ and $A_2$ satisfy  $0 < \frac{k_3}{l_1 - k_3} < \frac{k_2}{l_1 - k_2 - l_2}$. Moreover, 
the roots of $\Phi$ are $-1$ with even multiplicity $l_2$, $0$ with odd multiplicity $k_2-k_3$ and the positive root of $A_1$ (which is a simple root of $\Phi$).
The poles of $\Phi$ are the positive root of $A_2$ and the point at infinity which has multiplicity $\mbox{deg}(\Phi)-1$ if we homogeinize $\Phi$ into a rational map from the Riemann sphere $\C P^1$ to itself.

\begin{figure}[htbp]
\begin{center}
 \resizebox{.5\textwidth}{!}{
\input{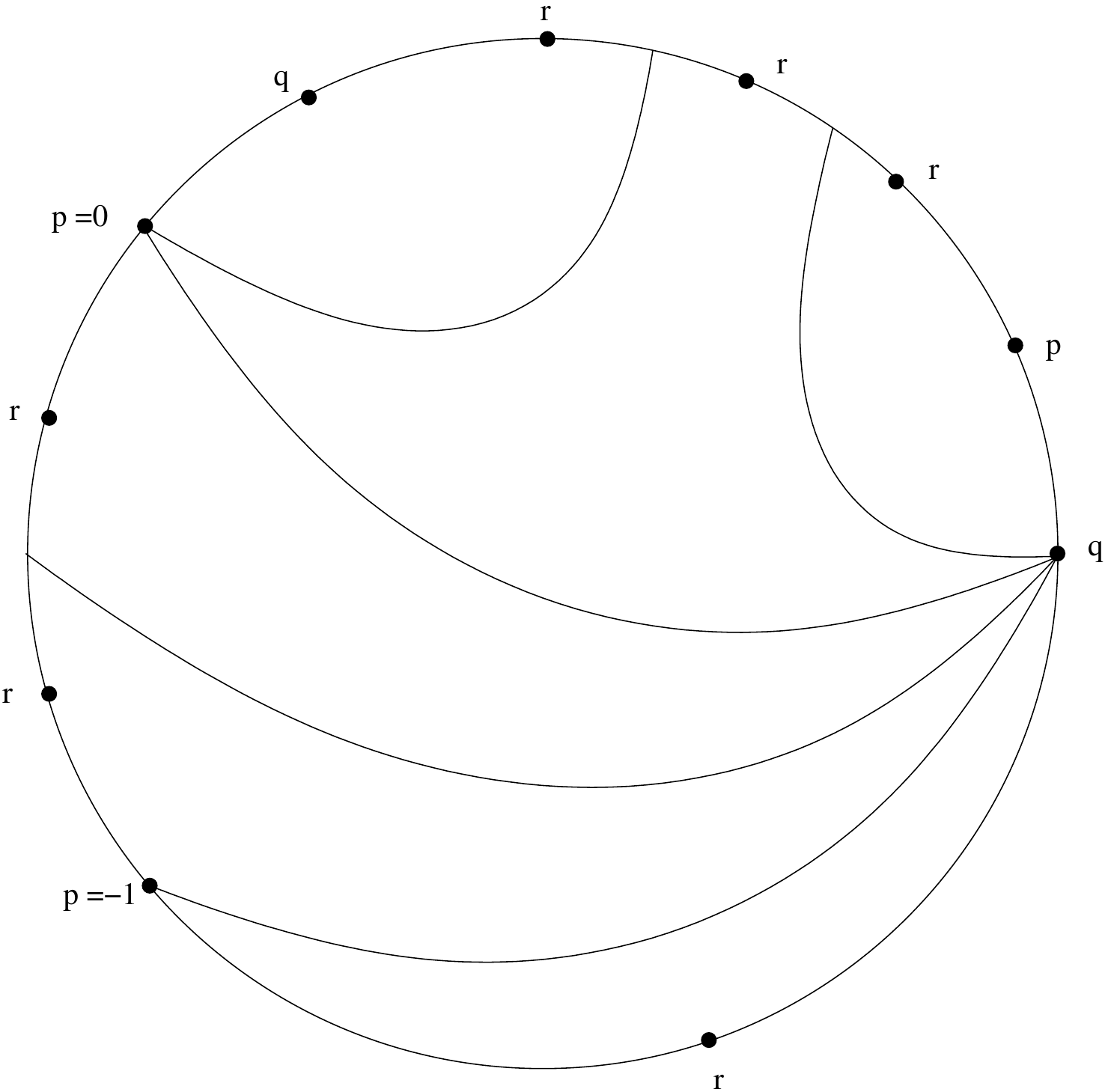_t}
}
\caption{Real dessin d'enfant for $\varphi$.}
\label{F:dessin}
\end{center}
\end{figure}

We find exact values of coefficients and exponents of \eqref{E:fraction} in the following way.
Note that the exponents of \eqref{E:fraction} are independent of $l_1$. We first choose small values $k_2=5$, $k_3=2$, $l_2=2$ satisfying the above parity conditions.
Then, we look for a function 

\begin{equation}\label{E:candidate}
\varphi(x)=\frac{cx^3(x+1)^2(x-\rho_1)}{x-\rho_2},
\end{equation}
such that $c$ is some real constant, $0 <\rho_2 <\rho_1$ and $\varphi(x)=1$ has three solutions in $I_1$, one solution in $I_2$ and two solutions in $I_3$.

The existence of such a function $\varphi$ is certified by Figure \ref{F:dessin} thanks to Riemann Uniformization Theorem.
Figure \ref{F:dessin} represents the intersection of the graph $\Gamma=\varphi^{-1}(\R P^1)$ with one connected component of $\C P^1 \setminus \R  P^1$
(the whole graph can be recovered from this intersection since it is invariant by complex conjugation). Such a graph is called {\it real dessin d'enfant} (see~\cite{B,Br,O} for instance).
Each connected component of $\C P^1 \setminus \Gamma$ (a disc)
can be endowed with an orientation inducing the order $p<q<r$ for the three letters $p,q,r$ in its boundary so that two adjacent discs get opposite orientations.
Choose coordinates on the target space $\C P^1$. Choose one connected component of  $\C P^1 \setminus \Gamma$ and send it homeomorphically to one connected component of $\C P^1 \setminus \R P^1$ so that letters $p$ are sent to $(1:0)$, letters $q$ are sent to $(0:1)$ and letters $r$  to $(1:1)$.
Do the same for each connected component of $\C P^1 \setminus \Gamma$ so that the resulting homeomorphisms extend to an orientation preserving continuous map
$\varphi: \C P^1 \rightarrow \C P^1$. Note that two adjacent connected components of $\C P^1 \setminus \Gamma$
are sent to different connected components of $\C P^1 \setminus \R P^1$. The Riemann Uniformization Theorem implies that $\varphi$ is a real rational map for the standard complex structure
on the target space and its pull-back by $\varphi$ on the source space.
The degree of $\varphi$ is half the number of connected components of $\C P^1 \setminus \Gamma$ (a generic point on the target space has one preimage in each  component
of $\C P^1 \setminus \Gamma$ with the correct orientation). The critical points of $\varphi$ are the vertices of $\Gamma$, and the multiplicity of each critical point is half its valency.
The letters $p$ are the inverse images of $(1:0)$, the letters $q$ are the inverse images of $(0:1)$ and the letters $r$ are inverse images of $(1:1)$. 
In Figure \ref{F:dessin}, we see three letters $p$ on $\Gamma$, two of them being critical points with multiplicities three and two respectively
(the valencies are six and four, recall that Figure \ref{F:dessin} show only one half of $\Gamma$).
We also see two letters $q$, one of them being a critical point of multiplicity five.
Choose coordinates on the source space $\C P^1$ so that the critical point $p$ of multiplicity three has coordinates $(1:0)$, the other critical point $p$ has coordinates $(1:-1)$ and the critical point $q$ is the point with coordinates $(0:1)$. In standard affines coordinates (for both the source and the target spaces) of the chart where the first homogeneous coordinate does not vanish, any
rational map whose real dessin d'enfant is as depicted in Figure \ref{F:dessin} is defined by \eqref{E:candidate}.
From Figure \ref{F:dessin}, we see that $0 < \rho_2< \rho_1$ and that $\varphi$ has the desired number of inverse images (letters $r$) of $1$ in each interval
$I_i$.

Now we want to identify \eqref{E:candidate} and \eqref{E:fraction}.
Recall that $k_2=5$, $k_3=2$, $l_2=2$ are fixed. We look at the function $\frac{x^3(x+1)^2(x-\rho_1)}{x-\rho_2}$,
where $\rho_1=\frac{k_2}{l_1 - k_2 - l_2}$ and $\rho_2=\frac{k_3}{l_1 - k_3}$, and increase $l_1$  so that some level set of this function has three solutions in $I_1$,
one solution in $I_2$ and two solutions in $I_3$. It turns out that $l_1 = 17$ is large enough and the level set gives the value $29$ for $b$.
Finally, integrating $\Phi$ 
and choosing $a=-0,002404$, we get
$$
-0.002404(x+1)^{17}+29x^5(x+1)^2+x^2
$$
for \eqref{Preduced}. This polynomial has fours roots in $I_1$, two roots in $I_2$ and three roots in $I_3$.
This has been computed using SAGE version 6.6 which gives the following approximated roots:
$0.18859$, $0.22206$, $0.25196$, $0.44416$ in $I_1$,
$-3.96032$, $-1.15048$ in $I_2$, and
$-0.61459$, $-0.58528$, $-0.03594$ in $I_3$.

Multiplying this polynomial by $x(x+1)$ gives a polynomial of the form $P(x,x+1)$ (where $P \in \R[x,y]$ has three non-zero terms)
having eleven real roots.

%
 
 \providecommand{\bysame}{\leavevmode\hbox to3em{\hrulefill}\thinspace}
\providecommand{\MR}{\relax\ifhmode\unskip\space\fi MR }
\providecommand{\MRhref}[2]{%
  \href{http://www.ams.org/mathscinet-getitem?mr=#1}{#2}
}
\providecommand{\href}[2]{#2}

\end{document}